\newtheorem{theorem}{Theorem}[section]
\newtheorem{corollary}[theorem]{Corollary}
\def\xybiglabels{\def\labelstyle{\textstyle}}
\def\2{\mathsf{2}}
\def\phi{\varphi}
\def\sm{\setminus}
\def\Fr{\mathsf{Fr}\,}
\begin{document}
\title{Corrigendum to \\``Groupoids, the Phragmen-Brouwer Property,  \\ and the Jordan
Curve Theorem'',\\ J. Homotopy and Related Structures 1 (2006) 175-183.
}
\author{Ronald Brown}
\email{r.brown@bangor.ac.uk}
\address{School of Computer Science,  Bangor University, UK}

\author{Omar Antol{\'\i}n Camarena}
\email{oantolin@math.harvard.edu}
\address{Department of Mathematics, Harvard University, Boston, Mass. USA}
\classification{20L05, 57N05}

\keywords{fundamental groupoid, van Kampen theorem, Phragmen--Brouwer Property, Jordan
Curve Theorem}

\begin{abstract}

Omar Antol{\'\i}n Camarena pointed out a gap in the proofs in \cite{Brown15,BrownJordan} of a condition for the
Phragmen--Brouwer Property not to hold;  this note gives the correction in terms of a result on a pushout of groupoids, and some additional background.
\end{abstract}

\received{}
\revised{}
\published{}
\submitted{Fred Cohen}
\volumeyear{}
\volumenumber{}
\issuenumber{1}

\startpage{1}

 \maketitle
\section{Introduction}

This note fills in two ways a gap in  a proof in \cite{BrownJordan}, as explained in Section \ref{sec:PBP}.

The paper \cite{BrownJordan} shows how groupoid methods can be used to relate fundamental groups to a classic separation property of spaces, often called the Phragmen-Brouwer Poperty.  However it turns out, as we explain in Section 2, that the applications to the Jordan Curve Theorem require the following stronger result on groupoids:

\begin{theorem}\label{thm:push} Suppose  given a pushout of groupoids
\begin{equation}\label{eq:push2}
  \vcenter{\xybiglabels \xymatrix{C \ar [r] ^j \ar [d]_i & B \ar [d]^v\\
  A \ar [r]_u & G }}
\end{equation}
such that $i,j$ are bijective on objects, $C$ is totally disconnected, and $G$ is connected. Then $G$ contains as a retract a free groupoid whose vertex groups are of rank
  $$k=n_C-n_A-n_B+1,$$ where $n_P$ is the number of components of the groupoid $P$ for $P=A,B,C$ (assuming these numbers are finite).

 Further, if $C$ contains distinct objects $a,b$ such that $A(ia,ib),B(ja,jb)$ are nonempty, then $F$ has rank at least $1$.
\end{theorem}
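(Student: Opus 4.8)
The plan is to produce the free retract explicitly, as a pushout of wide tree subgroupoids, the key point being that total disconnectedness of $C$ lets one ``collapse vertex groups'' in a way compatible with the pushout.

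First I would normalise the objects. Since $\Ob\colon\Gpds\to\Sets$ has both adjoints (the discrete and the codiscrete groupoid on a set), it preserves colimits, so $\Ob(G)$ is the pushout of $\Ob(A)\leftarrow\Ob(C)\to\Ob(B)$; as $i,j$ are bijective on objects this forces $u,v$ to be bijective on objects, and after identifications all four groupoids have a single object set $X$ with $|X|=n_C$ (each object of $C$ being its own component). Now choose wide tree subgroupoids $T_A\subseteq A$ and $T_B\subseteq B$, i.e.\ spanning forests: $T_A$ is a disjoint union of $n_A$ tree groupoids covering $X$, obtained from a coherent choice of arrows within each component of $A$, and similarly $T_B$. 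Viewing the free-groupoid functor $\Fr\colon\Grphs\to\Gpds$ as left adjoint to the underlying-graph functor, $T_A=\Fr(\Gamma_A)$ for a spanning-forest graph $\Gamma_A$ on $X$ with $|X|-n_A$ edges, and $T_B=\Fr(\Gamma_B)$ with $|X|-n_B$ edges. Set $F:=T_A\sqcup_{\mathsf{I}_X}T_B$, the pushout over the discrete groupoid $\mathsf{I}_X$ on $X$; since $\Fr$ preserves pushouts, $F=\Fr(\Gamma)$ where $\Gamma=\Gamma_A\sqcup_X\Gamma_B$ is the graph on $X$ with edge set $E(\Gamma_A)\sqcup E(\Gamma_B)$. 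Finally, $G$ is connected iff the join of the partitions of $X$ into $A$-components and into $B$-components is trivial, iff $\Gamma$ is connected; so $F$ is a connected free groupoid whose vertex groups are free of rank $|E(\Gamma)|-|X|+1=(|X|-n_A)+(|X|-n_B)-|X|+1=n_C-n_A-n_B+1=k$.

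Next I would show $F$ is a retract of $G$. The composites $T_A\hookrightarrow A\xrightarrow{u}G$ and $T_B\hookrightarrow B\xrightarrow{v}G$ agree on $\mathsf{I}_X$ (both send $\id_x$ to $\id_x$), so the universal property of $F$ gives $q\colon F\to G$. In the other direction, collapsing vertex groups defines a retraction $r_A\colon A\twoheadrightarrow T_A$, sending an arrow $x\to y$ to the unique $T_A$-arrow $x\to y$; this is a functor because composition of tree-groupoid arrows is forced, and $r_A$ restricts to the identity on $T_A$. Similarly one gets $r_B\colon B\twoheadrightarrow T_B$. Here is where total disconnectedness of $C$ enters: every arrow of $C$ is a loop $c\in C(x)$, so the two composites $C\xrightarrow{i}A\xrightarrow{r_A}T_A\hookrightarrow F$ and $C\xrightarrow{j}B\xrightarrow{r_B}T_B\hookrightarrow F$ both send $c$ to $\id_x$ and hence coincide; the universal property of the pushout $G$ then yields $p\colon G\to F$ with $pu=(T_A\hookrightarrow F)\circ r_A$ and $pv=(T_B\hookrightarrow F)\circ r_B$. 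Since $r_A$ and $r_B$ are the identity on $T_A$ and $T_B$, the morphism $pq$ is the identity on $T_A,T_B$, which generate $F$; hence $pq=\id_F$, so $F$ is a retract of $G$.

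For the last assertion, suppose $C$ has distinct objects $a,b$ with $A(ia,ib)$ and $B(ja,jb)$ nonempty; after the identification above this says $a,b$ lie in a common component of $A$ and in a common component of $B$. Then in $\Gamma$ the forest $\Gamma_A$ contains a path $P_A$ from $a$ to $b$ and $\Gamma_B$ a path $P_B$ from $a$ to $b$, each using at least one edge since $a\ne b$. Because $E(\Gamma_A)$ and $E(\Gamma_B)$ are disjoint in $\Gamma$, the paths $P_A$ and $P_B$ are edge-disjoint, and two edge-disjoint nonempty paths with the same endpoints force a cycle in $\Gamma$ (their edge sets union to a nonempty even subgraph). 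Hence $|E(\Gamma)|-|X|+1\ge 1$, i.e.\ $k\ge 1$, so the vertex groups of $F$ have rank at least $1$.

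The one delicate point is the well-definedness of $p\colon G\to F$, which is exactly where the hypothesis on $C$ is used: a $C$ carrying non-loop arrows could impose relations in $G$ identifying edges of $\Gamma_A$ with edges of $\Gamma_B$, and collapsing vertex groups would then fail to be compatible with the pushout. Everything else is formal: that $\Fr$ preserves the relevant pushout, and that the vertex groups of a connected free groupoid on a graph are free of rank equal to its first Betti number.
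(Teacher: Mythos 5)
Your proof is correct and takes essentially the same route as the paper: reduce to the pushout of the free groupoids on spanning forests of $A$ and $B$ over the discrete groupoid on the common object set, show this free groupoid is a retract of $G$, and compute the vertex-group rank by the Euler-characteristic count $e(W)-v(W)+1$. The only differences are in presentation: you build the retraction $G\to F$ explicitly (which usefully makes visible exactly where total disconnectedness of $C$ is needed, a point the paper leaves implicit in its ``retract of spans'' formulation), and for the final assertion you argue via two edge-disjoint paths forcing a cycle, where the paper instead chooses the forests adapted to $\alpha,\beta$ and asserts $\alpha'\beta'$ is nontrivial.
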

The proof is given in Section \ref{sec:proof}.
It is easily seen how the Theorem applies to topological situations like the following:
\begin{center}
  \includegraphics[width=6cm,height=3cm]{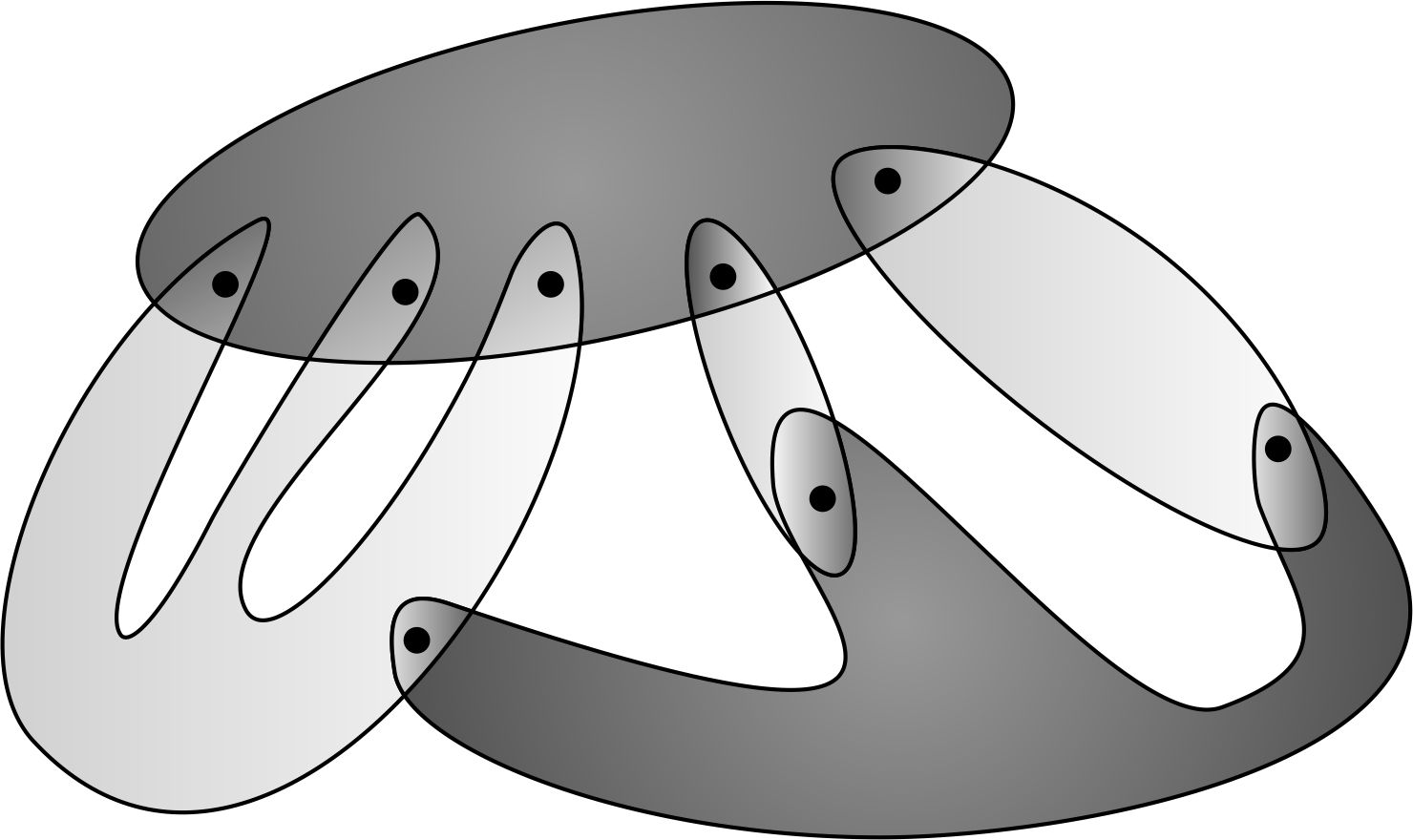}
\end{center}
for which covering space methods are more awkward.

\section{The Phragmen-Brouwer Property}\label{sec:PBP}

One source of confusion is that there are two forms of this property, as follows.

Let $X$ be a topological space. We say a subspace
$D$ of $X$  {\it separates}  $X$ if $X \sm D$  has more than one component. We say a subset $D$ of $X$ {\it separates the points $a$ and $b$ of $X$ } if
$a$  and $b$ lie in distinct components of $X \sm D$.

A topological space $X$ is said to have the {\it Phragmen-Brouwer Property I},  here abbreviated to PBI),
if $X$ is connected and the following holds:

\noindent PBI): If $D$ and $E$ are disjoint closed subsets of $X$ and $a, b$ are points of $X$ not in
$D\cup  E$  such that neither $D$ nor $E$ separate $a$ and $b$, then $D \cup E$ does not separate $a$ and $b$.

We also consider:

\noindent PBI$\, '$):  If $D$, $E$ are disjoint closed subsets of $X$ such that neither $D$ nor $E$ separate $X$, then $D \cup  E$
does not separate $X$.

Such a space $X$ is also called {\it unicoherent}. In \cite[pp. 47--49]{Wilder1} it is proved that these two properties PBI), PBI$\, '$), are equivalent if $X$ is connected and locally connected.

It is of interest and importance to relate these properties to the fundamental group of the space $X$;  see   \cite[Section3]{Eilenberg}, and for further information on this area see \cite{Wilder1,GMI}.

Now we can state what is the gap in \cite{BrownJordan}: Proposition 4.1 in that paper shows that if $X$ is connected and does not satisfy  PBI$\, '$), then the fundamental group of $X$ contains the integers as a retract. However what is needed for the Jordan Curve Theorem is actually Corollary \ref{cor:PBI} in the next section, which is the same criterion but for $X$ not satisfying PBI). This is required because the proof of Proposition 4.3 in \cite{BrownJordan} uses PBI). The equivalence of PBI) and PBI$\, '$) fills the gap, but the proof in the next section fits better with the goal of the original proof of letting the algebra of groupoids do most of the work and minimizing the point-set topology input.

\section{Proof of Theorem \ref{thm:push}}\label{sec:proof}

Let $ \Fr  : \mathsf{ DirectedGraphs}  \to  \mathsf{Groupoids}$ be the free groupoid functor. From \cite[Chapter 8]{Brown15}\footnote{See also \cite{Higgins4}.} it follows that any groupoid $G$ has a retraction $G \to \Fr W$ where $W$ is a forest.

 Let $Z$ be the set of objects of $C$ (and of $A, B$ and $G$) regarded as a directed graph with no edges.  Pick spanning forests $X$ and $Y$ of the underlying directed graphs of $A$ and $B$.  Then there are retractions $$C\to  \Fr  Z,\; A \to  \Fr  X, \; B \to  \Fr  Y.  $$  By  ``span" in a category we mean a pair of arrows $U \leftarrow W \rightarrow V$; this is  the shape of a diagram whose pushout you can take. Then  the following diagram in $\mathsf{Groupoids}$ commutes and its  rows are spans:

\begin{equation*}
\vcenter{\xymatrix{ \Fr  X \ar [d] &  \Fr  Z \ar [d] \ar [l]  \ar [r] & \ar [d] \Fr  Y\\
A \ar [d] & C \ar [d] \ar [r]\ar [l] & B\ar [d] \\
 \Fr  X & \ar [l] \Fr Z \ar [r] &  \Fr  Y}}
\end{equation*}
So the span $ \Fr  X \leftarrow  \Fr  Z \to   \Fr  Y$  is a retract of the span $A \leftarrow C \to  B$. This implies that the pushout, say $F$, of $ \Fr  X \leftarrow  \Fr  Z \to   \Fr  Y$ is a retract of $G$ (which is the pushout of $A \leftarrow C \to  B$). Since the span of free groupoids is actually the image under $\Fr$  of the obvious span $X \leftarrow Z \to  Y$ of graphs, and since $\Fr$ is a left adjoint, this pushout $F$ is actually just $ \Fr  W$ where $W$ is the pushout in the category of directed graphs of $X \leftarrow Z \to  Y$.

This graph $W$ is connected because $G$ is connected, so, denoting by $e$(Q) and $v$(Q) the number of vertices of a graph, the vertex groups in $ \Fr  W$ are free of rank $k = e(W) - v(W) + 1$. We have $v(W) = v(X) = v(Y) = v(Z) = n_C$; and, since $Z$ has no edges, $e(W) = e(X) + e(Y)$. Also, since $X$ is a spanning forrest we have $e(X) = v(X) - n_A = n_C - n_A$, and similarly, $e(Y) = n_C - n_A$. Putting this all together, the vertex groups in $F$ have rank $(n_C - n_A) + (n_C - n_B) - n_C + 1 = n_C - n_A - n_B + 1$, as claimed.

For the last part of the theorem, we  choose $X, Y$  so that the elements $\alpha, \beta$ of $A(ia,ib), B(jb,ja)$ respectively are parts of $\Fr X, \Fr Y$ respectively.  These map to elements $\alpha', \beta'$ in $F$ and  the element $\alpha'\beta'$  will be  nontrivial in $F$;  so  $F$ has rank at least $1$. This completes the proof.

The next Corollary is an essential part of the proof of the Jordan Curve Theorem. It appears, without the retraction condition,  as part of  \cite[Theorem 63.1]{Munkres1}, and also as \cite[Proposition 4.1]{BrownJordan},\cite[9.2.1]{Brown15}.

\begin{corollary}\label{cor:PBI}
If the space $X$ is path connected and does not have the PBI), then its fundamental group at any point contains the infinite cyclic group as a retract.
\end{corollary}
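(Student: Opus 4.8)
The plan is to derive the corollary from Theorem~\ref{thm:push} by applying the van Kampen theorem for the fundamental groupoid at a carefully chosen set of base points. First I would unwind the failure of PBI): there are disjoint closed subsets $D,E$ of $X$ and points $a,b\in X\sm(D\cup E)$ such that neither $D$ nor $E$ separates $a$ from $b$, while $D\cup E$ does. Put $U=X\sm D$ and $V=X\sm E$, so $U,V$ are open, $U\cup V=X$, and $U\cap V=X\sm(D\cup E)$. The separation hypotheses translate into: $a,b$ lie in a single path-component of $U$, in a single path-component of $V$, but in two distinct path-components of $U\cap V$.

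Next I would pick a set $S$ of base points contained in $U\cap V$, containing $a$ and $b$, with exactly one point in each path-component of $U\cap V$; one then checks that $S$ meets every path-component of $U$ and of $V$ as well. The point needing care --- and, I expect, the main obstacle, since it is precisely where the original argument was deficient --- is seeing that no path-component $P$ of $U$ can be disjoint from $U\cap V$. This follows from a short connectedness argument: joining a point of $P$ to a point of $U\cap V$ by a path in $X$ and taking the first parameter at which the path leaves $U$, the corresponding point lies in $D$, hence (as $D,E$ are disjoint) in $V$, and it lies in $\overline P$; since $V$ is open, $P$ must then meet $V$, hence $U\cap V$. Because path-components of $U\cap V$ refine those of $U$ and of $V$, the chosen $S$ indeed meets every path-component of $U$, $V$ and $U\cap V$, and the condition $S\subseteq U\cap V$ makes the inclusion-induced maps $\pi_1(U\cap V,S)\to\pi_1(U,S)$ and $\pi_1(U\cap V,S)\to\pi_1(V,S)$ bijective on objects.

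Finally I would invoke the groupoid van Kampen theorem to present $\pi_1(X,S)$ as the pushout of $\pi_1(U,S)\leftarrow\pi_1(U\cap V,S)\to\pi_1(V,S)$. Here $C=\pi_1(U\cap V,S)$ is totally disconnected (one base point per path-component), the two legs are bijective on objects, and $G=\pi_1(X,S)$ is connected since $X$ is path-connected; so Theorem~\ref{thm:push} yields a free groupoid $F$ that is a retract of $\pi_1(X,S)$. Moreover $a,b$ are distinct objects of $C$ with $\pi_1(U,S)(a,b)$ and $\pi_1(V,S)(a,b)$ nonempty, so by the last clause of Theorem~\ref{thm:push} the groupoid $F$ has rank at least $1$. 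Restricting the retraction and its section (both the identity on objects) to the vertex group at $a$ exhibits a free group of rank $\ge 1$ as a retract of $\pi_1(X,a)$; such a group retracts onto $\Z$, retracts compose, and $\pi_1(X,a)\cong\pi_1(X,x)$ for every $x$ since $X$ is path-connected, so $\pi_1(X,x)$ contains the infinite cyclic group as a retract for all $x$.
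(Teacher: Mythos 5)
Your proof is correct and is in substance the paper's own: the paper's proof of Corollary \ref{cor:PBI} consists of the instruction to run the argument of \cite{BrownJordan} with that paper's Corollary 3.5 replaced by Theorem \ref{thm:push}, and this is exactly what you have reconstructed --- the open cover $U=X\sm D$, $V=X\sm E$, a base-point set $S$ with one point in each path-component of $U\cap V$, the groupoid van Kampen theorem giving the pushout square to which Theorem \ref{thm:push} (including its final clause, applied to the distinct objects $a,b$) applies, and the passage from the free groupoid retract to a retraction of the vertex group onto $\Z$. Your verification that every path-component of $U$ meets $U\cap V$ is the same first-exit-point lemma used in \cite{BrownJordan,Brown15} to check that $S$ is representative in $U$ and $V$, so nothing essentially different is involved; the only implicit convention, which you share with the original treatment, is reading ``does not separate $a$ and $b$'' in PBI) as ``$a$ and $b$ are joined by a path in the complement,'' which is harmless in the locally path-connected settings (such as the Jordan Curve Theorem application) where the corollary is used.
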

\begin{proof}
The proof now follows the methods of \cite{BrownJordan}, replacing that paper's Corollary 3.5 with Theorem \ref{thm:push}.
\end{proof}

\end{document}